\makeatletter \@namedef{subjclassname@2010}{
  \textup{2020} Mathematics Subject Classification}
\newtheorem{thm}{Theorem}[section]
\newtheorem{cor}[thm]{Corollary}
\newtheorem{pro}[thm]{Proposition}
\theoremstyle{remark}
\newtheorem*{rema}{Remark}
\theoremstyle{definition}
\newtheorem{exa}[thm]{\textbf{Example}}
\newcommand{\ran}{\text{\rm{ran}}}
\newcommand{\R}{\mathbb{R}}
\newcommand{\C}{\mathbb{C}}
\begin{document}

\title[Fuglede-Putnam theorem]{Unbounded generalizations of the Fuglede-Putnam theorem}
\author[S. Dehimi, M. H. Mortad and A. Bachir]{Souheyb Dehimi, Mohammed Hichem Mortad$^*$ and Ahmed Bachir}

\thanks{* Corresponding author.}
\date{}
\keywords{Normal operator; Closed operator; Fuglede-Putnam theorem;
Hilbert space; Commutativity}

\subjclass[2010]{Primary 47B25. Secondary 47B15, 47A08.}

\address{(The first author) Department of Mathematics, Faculty of Mathematics and Informatics,
University of Mohamed El Bachir El Ibrahimi, Bordj Bou Arréridj,
El-Anasser 34030, Algeria.}

\email{souheyb.dehimi@univ-bba.dz, sohayb20091@gmail.com}

\address{(The corresponding author) Department of
Mathematics, University of Oran 1, Ahmed Ben Bella, B.P. 1524, El
Menouar, Oran 31000, Algeria.}

\email{mhmortad@gmail.com, mortad.hichem@univ-oran1.dz.}

\address{(The third author) Department of Mathematics, College of Science, King Khalid University, Abha, Saudi
Arabia.}

\email{abishr@kku.edu.sa, bachir\_ahmed@hotmail.com}

\begin{abstract}In this paper, we prove and disprove several generalizations of unbounded versions of the Fuglede-Putnam
theorem.
\end{abstract}

\maketitle

\section{Essential background}

All operators considered here are linear but not necessarily
bounded. If an operator is bounded and everywhere defined, then it
belongs to $B(H)$ which is the algebra of all bounded linear
operators on $H$ (see \cite{Mortad-Oper-TH-BOOK-WSPC} for its
fundamental properties).

Most unbounded operators that we encounter are defined on a subspace
(called domain) of a Hilbert space. If the domain is dense, then we
say that the operator is densely defined. In such case, the adjoint
exists and is unique.

Let us recall a few basic definitions about non-necessarily bounded
operators. If $S$ and $T$ are two linear operators with domains
$D(S)$ and $D(T)$ respectively, then $T$ is said to be an extension
of $S$, written as $S\subset T$, if $D(S)\subset D(T)$ and $S$ and
$T$ coincide on $D(S)$.

An operator $T$ is called closed if its graph is closed in $H\oplus
H$. It is called closable if it has a closed extension. The smallest
closed extension of it is called its closure and it is denoted by
$\overline{T}$ (a standard result states that a densely defined $T$
is closable iff $T^*$ has a dense domain, and in which case
$\overline{T}=T^{**}$). If $T$ is closable, then
\[S\subset T\Rightarrow \overline{S}\subset
\overline{T}.\] If $T$ is densely defined, we say that $T$ is
self-adjoint when $T=T^*$; symmetric if $T\subset T^*$; normal if
$T$ is \textit{closed} and $TT^*=T^*T$.

The product $ST$ and the sum $S+T$ of two operators $S$ and $T$ are
defined in the usual fashion on the natural domains:

\[D(ST)=\{x\in D(T):~Tx\in D(S)\}\]
and
\[D(S+T)=D(S)\cap D(T).\]

In the event that $S$, $T$ and $ST$ are densely defined, then
\[T^*S^*\subset (ST)^*,\]
with the equality occurring when $S\in B(H)$. If $S+T$ is densely
defined, then
\[S^*+T^*\subset (S+T)^*\]
with the equality occurring when $S\in B(H)$.

Let $T$ be a linear operator (possibly unbounded) with domain $D(T)$
and let $B\in B(H)$. Say that $B$ commutes with $T$ if
\[BT\subset TB.\]
In other words, this means that $D(T)\subset D(TB)$ and
\[BTx=TBx,~\forall x\in D(T).\]

Let $A$ be an injective operator (not necessarily bounded) from
$D(A)$ into $H$. Then $A^{-1}: \ran(A)\rightarrow H$ is called the
inverse of $A$, with $D(A^{-1})=\ran(A)$.

If the inverse of an unbounded operator is bounded and everywhere
defined (e.g. if $A:D(A)\to H$ is closed and bijective), then $A$ is
said to be boundedly invertible. In other words, such is the case if
there is a $B\in B(H)$ such that
\[AB=I\text{ and } BA\subset I.\]
If $A$ is boundedly invertible, then it is closed.

The resolvent set of $A$, denoted by $\rho(A)$, is defined by
\[\rho(A)=\{\lambda\in\C:~\lambda I-A\text{ is bijective and }(\lambda I-A)^{-1}\in B(H)\}.\]

The complement of $\rho(A)$, denoted by $\sigma(A)$,
\[\sigma(A)=\C\setminus \rho(A)\]
is called the spectrum of $A$.

\section{Introduction}

The aim of this paper is to obtain some generalizations of the
Fuglede-Putnam theorem involving unbounded operators.

Recall that the original version of the Fuglede-Putnam theorem
reads:

\begin{thm}\label{Fug-Put UNBD A BD}(\cite{FUG}, \cite{PUT}) If $A\in B(H)$ and if $M$
and $N$ are normal (non necessarily bounded) operators, then
\[AN\subset MA\Longrightarrow AN^*\subset M^*A.\]
\end{thm}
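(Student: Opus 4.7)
The plan is to reduce the claim to the classical (bounded) Fuglede--Putnam theorem by passing to resolvents. Since $M$ and $N$ are normal, and hence closed, the resolvent sets $\rho(M)$ and $\rho(N)$ are open; I shall assume $\rho(M)\cap\rho(N)\neq\emptyset$ and fix $\lambda$ in this intersection. Then
\[R:=(N-\lambda I)^{-1}\in B(H)\quad\text{and}\quad S:=(M-\lambda I)^{-1}\in B(H)\]
are bounded normal operators (the resolvent of a normal operator is itself normal, since $N-\lambda I$ commutes with $N^*-\bar\lambda I$ by $NN^*=N^*N$).

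The argument then proceeds in three short steps. \textbf{Step 1.} Transfer the hypothesis $AN\subset MA$ to the bounded identity $AR=SA$ on $H$: given $y\in H$, set $x=Ry\in D(N)$, so that $(N-\lambda I)x=y$; since $A\in B(H)$ and $AN\subset MA$, one has $Ax\in D(M)$ and $ANx=MAx$, whence
\[Ay=A(N-\lambda I)x=(M-\lambda I)Ax,\]
i.e.\ $ARy=Ax=SAy$. \textbf{Step 2.} Apply the classical bounded Fuglede--Putnam theorem (whose standard proof uses Liouville's theorem applied to the entire map $\zeta\mapsto e^{\zeta S^*}Ae^{-\zeta R^*}$) to deduce $AR^*=S^*A$, that is,
\[A(N^*-\bar\lambda I)^{-1}=(M^*-\bar\lambda I)^{-1}A.\]
\textbf{Step 3.} Invert this identity to recover the desired inclusion: for $x\in D(N^*)$ set $y=(N^*-\bar\lambda I)x\in H$; then
\[Ax=A(N^*-\bar\lambda I)^{-1}y=(M^*-\bar\lambda I)^{-1}Ay,\]
which lies in $D(M^*)$, and applying $M^*-\bar\lambda I$ to both sides yields $M^*Ax-\bar\lambda Ax=Ay=AN^*x-\bar\lambda Ax$, so $M^*Ax=AN^*x$, i.e.\ $AN^*\subset M^*A$.

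The main technical obstacle is securing $\lambda\in\rho(M)\cap\rho(N)$, which holds automatically in all standard examples but can fail in extreme situations where an unbounded normal operator has spectrum equal to $\C$ (for instance multiplication by $z$ on $L^2(\C,e^{-|z|^2}dxdy)$). In such pathological cases one bypasses the resolvent by working instead with the bounded transform $N(I+N^*N)^{-1/2}$; but for the statement as quoted the resolvent reduction above is the cleanest route, and it isolates the entire analytic difficulty inside the bounded Fuglede--Putnam theorem, which is used as a black box.
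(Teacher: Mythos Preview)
The paper does not supply a proof of this theorem: it is quoted, with references to \cite{FUG} and \cite{PUT}, as the classical Fuglede--Putnam theorem and is thereafter used as a black box. So there is no ``paper's own proof'' to compare your attempt against.

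On the substance of your argument: Steps~1--3 are correct and give a clean derivation of the unbounded case from the bounded Fuglede--Putnam theorem \emph{whenever} $\rho(M)\cap\rho(N)\neq\emptyset$. The gap you flag is genuine, however, and the patch you sketch via the bounded transform $b(N)=N(I+N^*N)^{-1/2}$ does not close it as routinely as your last paragraph suggests. To run the same three--step scheme with $b(N),b(M)$ in place of $R,S$ you would need $A\,b(N)=b(M)\,A$, hence first $A(I+N^*N)^{-1}=(I+M^*M)^{-1}A$; but passing from $AN\subset MA$ to $A\,N^*N\subset M^*M\,A$ (equivalently $A(I+N^*N)\subset(I+M^*M)A$) already requires $AN^*\subset M^*A$, which is precisely the conclusion you are after. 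So the bounded--transform route, as stated, is circular. The fully general statement (both spectra possibly equal to $\C$) is handled in the original references by a direct spectral--measure argument for $N$ and $M$, not by reduction to the bounded theorem; your final sentence therefore understates what remains to be done in that case.
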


There have been many generalizations of the Fuglede-Putnam theorem
since Fuglede's paper. However, most generalizations were devoted to
relaxing the normality assumption. Apparently, the first
generalization of the Fuglede theorem to an unbounded $A$ was
established in \cite{Nussbaum-1969}. Then the first generalization
involving unbounded operators of the Fuglede-\textit{Putnam} theorem
is:

\begin{thm}\label{Fug-Put-MORTAD-PAMS-2003} If $A$ is a closed and symmetric operator and if $N$ is an unbounded normal operator, then
\[AN\subset N^*A\Longrightarrow AN^*\subset NA\]
whenever $D(N)\subset D(A)$.
\end{thm}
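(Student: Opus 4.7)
My plan is to reduce the unbounded statement to the classical bounded Fuglede--Putnam theorem (Theorem \ref{Fug-Put UNBD A BD}) via the resolvent trick. First, I observe that the inclusion $D(N)\subset D(A)$ together with the closedness of $A$ forces, by the closed graph theorem applied on the Banach space $(D(N),\|\cdot\|_N)$, a constant $c>0$ with $\|Ax\|\le c(\|x\|+\|Nx\|)$ for every $x\in D(N)$. Picking any $\lambda\in\rho(N)$ (nonempty because $N$ is normal) and setting
\[
T_\lambda:=A(N-\lambda)^{-1},
\]
the bounded operator $(N-\lambda)^{-1}$ maps $H$ into $D(N)\subset D(A)$, so $T_\lambda$ is everywhere defined and closed, and hence $T_\lambda\in B(H)$.

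The crux is to derive the intertwining $T_\lambda N\subset N^*T_\lambda$. Using the identity $N(N-\lambda)^{-1}=I+\lambda(N-\lambda)^{-1}$ together with $D(N)\subset D(A)$, one checks that $T_\lambda Nx=Ax+\lambda T_\lambda x$ for $x\in D(N)$. On the other hand, $(N-\lambda)^{-1}x\in D(AN)$ (because $(N-\lambda)^{-1}x\in D(N)\subset D(A)$ and $N(N-\lambda)^{-1}x=x+\lambda(N-\lambda)^{-1}x\in D(A)$), so the hypothesis $AN\subset N^*A$ gives $T_\lambda x\in D(N^*)$ and $N^*T_\lambda x=AN(N-\lambda)^{-1}x=Ax+\lambda T_\lambda x$. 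Comparing on $D(N)$ yields $T_\lambda N\subset N^*T_\lambda$, and Theorem \ref{Fug-Put UNBD A BD} applied to the bounded $T_\lambda$ and the normals $N,N^*$ delivers $T_\lambda N^*\subset NT_\lambda$.

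To unwind, note that $(N-\lambda)^{-1}$ commutes with $N^*$ by the normality of $N$. Hence, for $x\in D(N^*)$, the inclusion $T_\lambda N^*\subset NT_\lambda$ reads $NAw=AN^*w$ with $w:=(N-\lambda)^{-1}x$. As $x$ runs over $D(N^*)=D(N)$, the point $w$ runs exactly over $D(N^2)$, so the desired identity $NA=AN^*$ is established on $D(N^2)$.

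The main obstacle is the last step: promoting $NAw=AN^*w$ from $w\in D(N^2)$ to all $w\in D(AN^*)$, which is exactly what the inclusion $AN^*\subset NA$ asserts. I would handle this by a spectral cutoff: for $y\in D(AN^*)$, set $y_k:=E_ky$ where $E_k$ is the spectral projection of $N$ onto $\{|z|\le k\}$. Each $y_k$ lies in $D(N^m)$ for every $m$, and $y_k\to y$ in the graph norm of $N$, so $Ay_k\to Ay$ by the closed-graph bound. Combining the identity $NAy_k=AN^*y_k$ (valid because $y_k\in D(N^2)$) with the closedness of $N$ and of $A$, together with the convergence $E_kN^*y\to N^*y$ in $H$ and the fact that $N^*y\in D(A)$, should yield $Ay\in D(N)$ and $NAy=AN^*y$, which is the claimed inclusion.
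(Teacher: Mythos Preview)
The paper does \emph{not} supply its own proof of Theorem~\ref{Fug-Put-MORTAD-PAMS-2003}; it merely quotes the result and refers the reader to \cite{MHM1} and \cite{Mortad-Thesis-Edinburgh-2003}, remarking that in those sources only closedness and symmetry of $A$ are actually used. So there is no in-paper argument to compare against, and your resolvent-reduction strategy is in fact the classical route taken in \cite{MHM1}.

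That said, your proposal has a genuine gap in the final step, and a warning sign is that you never invoke the symmetry of $A$ anywhere. Up to and including the identity $NAw=AN^*w$ for $w\in D(N^2)$ your argument is fine. The difficulty is the passage from $D(N^2)$ to all of $D(AN^*)$. For $y\in D(AN^*)$ you set $y_k=E_ky$ and correctly get $y_k\to y$ in the graph norm of $N$, hence $Ay_k\to Ay$ by your closed-graph estimate. You then need $AN^*y_k\to AN^*y$, i.e.\ $A(E_kN^*y)\to A(N^*y)$. But your only control on $A$ is the bound $\|Ax\|\le c(\|x\|+\|Nx\|)$ for $x\in D(N)$, and the vector $N^*y$ is merely assumed to lie in $D(A)$, not in $D(N)$. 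Thus $E_kN^*y\to N^*y$ in $H$ gives you nothing about $A(E_kN^*y)$; the spectral projections $E_k$ are tied to $N$, not to $A$, so closedness of $A$ alone does not push the limit through. In short, the sentence ``together with the convergence $E_kN^*y\to N^*y$ in $H$ and the fact that $N^*y\in D(A)$, should yield \ldots'' is precisely where the argument breaks: those two facts do not combine to give convergence of $A(E_kN^*y)$.

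This is also where the unused hypothesis should enter. In \cite{MHM1} the symmetry $A\subset A^*$ is exploited (via adjoints of the bounded intertwiner $T_\lambda$) to close the argument; without it, counterexamples of the type discussed later in the present paper show one cannot expect the conclusion for an arbitrary closed $A$. You should revisit the last step with $A\subset A^*$ in hand rather than trying to force a density/cutoff argument that does not see the symmetry.
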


In fact, the previous result was established in \cite{MHM1} under
the assumption of the self-adjointness of $A$. However, and by
scrutinizing the proof in \cite{MHM1} or
\cite{Mortad-Thesis-Edinburgh-2003}, it is seen that only the
closedness and the symmetricity of $A$ were needed. Other unbounded
generalizations may be consulted in
\cite{Mortad-Fuglede-Putnham-All-unbd} and
\cite{Bens-MORTAD-FUGLEDE-DEHIMI}, and some of the references
therein. In the end, readers may wish to consult the survey
\cite{Mortad-FUG-PUT SURVEY BOOK} exclusively devoted to the
Fuglede-Putnam theorem and its applications.

\section{Generalizations of the Fuglede-Putnam theorem}

If a densely defined operator $N$ is normal, then so is its adjoint.
However, if $N^*$ is normal, then $N^{**}$ does not have to be
normal (unless $N$ itself is closed). A simple counterexample is to
take the identity operator $I_D$ restricted to some unclosed dense
domain $D\subset H$. Then $I_D$ cannot be normal for it is not
closed. But, $(I_D)^*=I$ which is the full identity on the entire
$H$, is obviously normal. Notice in the end that if $N$ is a densely
defined closable operator, then $N^*$ is normal if and only if
$\overline{N}$ is.

The first improvement is that in the very first version by B.
Fuglede, the normality of the operator is not needed as only the
normality of its closure will do. This observation has already
appeared in \cite{Boucif-Dehimi-Mortad}, but we reproduce the proof
here.

\begin{thm}\label{fuglede-type closure normal THM}
Let $B\in B(H)$ and let $A$ be a densely defined and closable
operator such that $\overline{A}$ is normal. If $BA\subset AB$, then
\[BA^*\subset A^*B.\]
\end{thm}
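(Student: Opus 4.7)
The plan is to reduce the statement to the classical Fuglede theorem (Theorem \ref{Fug-Put UNBD A BD} with $M=N$) applied to the normal operator $\overline{A}$, using the bounded operator $B$. The key observation is that the commutation relation $BA\subset AB$ can be lifted from $A$ to its closure $\overline{A}$, after which Fuglede applies immediately.

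First, I would prove that $BA\subset AB$ implies $B\overline{A}\subset\overline{A}B$. Take any $x\in D(\overline{A})$. Since $A$ is closable with closure $\overline{A}$, there exists a sequence $(x_n)\subset D(A)$ with $x_n\to x$ and $Ax_n\to\overline{A}x$. Because $B\in B(H)$, we have $Bx_n\to Bx$, and by the hypothesis $BA\subset AB$, we get $Bx_n\in D(A)$ with $ABx_n=BAx_n\to B\overline{A}x$. Since $\overline{A}$ is closed (being normal, hence by definition closed), this yields $Bx\in D(\overline{A})$ and $\overline{A}Bx=B\overline{A}x$, establishing $B\overline{A}\subset\overline{A}B$.

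Next, I invoke Theorem \ref{Fug-Put UNBD A BD} with the bounded operator $B$ and the normal operator $\overline{A}$ (playing the role of both $M$ and $N$) to conclude
\[B(\overline{A})^*\subset (\overline{A})^*B.\]
Finally, I use the standard identity $(\overline{A})^*=A^*$ for densely defined closable operators (which follows from $A\subset\overline{A}$ combined with $\overline{A}=A^{**}$ and taking adjoints). Substituting yields $BA^*\subset A^*B$, as desired.

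The only nontrivial step is the first one, lifting the commutation from $A$ to $\overline{A}$; however, this is a routine closure/limit argument made possible by the boundedness of $B$ and the closedness of $\overline{A}$, so no serious obstacle is expected. Everything else is a direct application of Fuglede's theorem and the identification of adjoints of closable operators with adjoints of their closures.
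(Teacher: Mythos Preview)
Your proof is correct. Both your argument and the paper's reduce the statement to the classical Fuglede theorem, but by different mechanisms. The paper proceeds entirely by adjoint manipulation: from $BA\subset AB$ it takes adjoints to obtain $B^*A^*\subset A^*B^*$ (using that $B\in B(H)$), applies Fuglede to the normal operator $A^*=(\overline{A})^*$ with the bounded operator $B^*$ to get $B^*\overline{A}\subset\overline{A}B^*$, and then takes adjoints once more to conclude $BA^*\subset A^*B$. You instead lift the commutation $BA\subset AB$ to $B\overline{A}\subset\overline{A}B$ by a direct sequential closure argument, and then apply Fuglede to the normal operator $\overline{A}$ with the bounded operator $B$. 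The paper's route is a bit slicker, replacing your limit computation by the algebraic rules for adjoints of products with a bounded factor; your route is more hands-on and makes explicit where the boundedness of $B$ and the closedness of $\overline{A}$ enter. Either way, the essential content is the same: once one knows $(\overline{A})^*=A^*$, the result is an immediate corollary of the classical Fuglede theorem.
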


\begin{proof}
Since $\overline{A}$ is normal, $\overline{A}^*=A^*$ remains normal.
Now,
\begin{align*}
BA\subset AB\Longrightarrow& B^*A^*\subset A^*B^* \text{ (by taking adjoints)}\\
\Longrightarrow &B^*\overline{A}\subset \overline{A}B^* \text{ (by using the classical Fuglede theorem)}\\
\Longrightarrow& BA^*\subset A^*B \text{ (by taking adjoints
again),}
\end{align*}
establishing the result.
\end{proof}

\begin{rema}
Notice that $BA^*\subset A^*B$ does not yield $BA\subset AB$ even in
the event of the normality of $A^*$ (see \cite{Mortad-cex-BOOK}).
\end{rema}

Let us now turn to the extension of the Fuglede-Putnam version. A
similar argument to the above one could be applied.

\begin{thm}\label{fuglede-Putnam type closure normal THM}
Let $B\in B(H)$ and let $N,M$ be densely defined closable operators
such that $\overline{N}$ and $\overline{M}$ are normal. If
$BN\subset MB$, then
\[BN^*\subset M^*B.\]
\end{thm}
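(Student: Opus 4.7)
The plan is to mirror, essentially verbatim, the three-line argument used for Theorem \ref{fuglede-type closure normal THM}: take adjoints, invoke the classical bounded Fuglede--Putnam theorem on the closures, and take adjoints once more. The normality of $\overline N$ and $\overline M$ gives us, via $\overline{N}^{*}=N^{*}$ and $\overline{M}^{*}=M^{*}$, that $N^{*}$ and $M^{*}$ are themselves normal, which is what lets us feed the hypothesis into Theorem \ref{Fug-Put UNBD A BD}.

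Concretely, starting from $BN\subset MB$ and taking adjoints I would use the two rules recalled in Section~1: since $B\in B(H)$, one has the equality $(BN)^{*}=N^{*}B^{*}$, while in general only the inclusion $B^{*}M^{*}\subset (MB)^{*}$ holds. Combining these with the fact that $BN\subset MB$ forces $(MB)^{*}\subset (BN)^{*}$ yields
\[
B^{*}M^{*}\subset N^{*}B^{*}.
\]
Now $B^{*}\in B(H)$ and both $M^{*}$ and $N^{*}$ are normal, so Theorem \ref{Fug-Put UNBD A BD} applies and delivers
\[
B^{*}(M^{*})^{*}\subset (N^{*})^{*}B^{*},\qquad\text{i.e.,}\qquad B^{*}\overline{M}\subset \overline{N}\,B^{*},
\]
using $M^{**}=\overline{M}$ and $N^{**}=\overline{N}$ for closable $M,N$.

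Taking adjoints a second time I would again combine an equality and an inclusion: since $B^{*}\in B(H)$, $(B^{*}\overline{M})^{*}=\overline{M}^{*}B^{**}=M^{*}B$, whereas only $B\,\overline{N}^{*}\subset (\overline{N}B^{*})^{*}$ in general. The inclusion $B^{*}\overline{M}\subset\overline{N}B^{*}$ reverses under adjoints to $(\overline{N}B^{*})^{*}\subset (B^{*}\overline{M})^{*}$, so chaining the three facts gives
\[
BN^{*}=B\,\overline{N}^{*}\subset (\overline{N}B^{*})^{*}\subset (B^{*}\overline{M})^{*}=M^{*}B,
\]
which is the desired conclusion.

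No step is really an obstacle; the only thing to watch is the asymmetry between equality and inclusion in the product-adjoint formulas, i.e.\ keeping track of which factor is bounded at each stage. The argument goes through purely because $B$ (and hence $B^{*}$) is bounded, so that both times we take adjoints the boundedness sits on the ``outside'' factor of the product whose adjoint we need as an equality, while the inclusion-only side is harmless because inclusions compose in the right direction.
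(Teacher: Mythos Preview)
Your proof is correct and follows essentially the same approach as the paper's. The only cosmetic difference is the order of the three steps: you do adjoint--Fuglede--Putnam--adjoint (exactly as in the proof of Theorem~\ref{fuglede-type closure normal THM}), whereas the paper's proof of Theorem~\ref{fuglede-Putnam type closure normal THM} takes adjoints twice first to reach $B\overline{N}\subset\overline{M}B$ and then applies Theorem~\ref{Fug-Put UNBD A BD} directly; the ingredients and the justification of each inclusion are identical.
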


\begin{proof} Since $BN\subset MB$, it ensues that $B^*M^*\subset
N^*B^*$. Taking adjoints again gives $B\overline{N}\subset
\overline{M}B$. Now, apply the Fuglede-Putnam theorem to the normal
$\overline{N}$ and $\overline{M}$ to get the desired conclusion
\[BN^*\subset M^*B.\]
\end{proof}

Jab{\l}o\'{n}ski et al. obtained in \cite{Jablonski et al 2014} the
following version.

\begin{thm}\label{jablonsky et al FUG type THM}
If $N$ is a normal (bounded) operator and if $A$ is a closed densely
defined operator with $\sigma(A)\neq \C$, then:
\[NA\subset AN\Longrightarrow g(N)A\subset Ag(N)\]
for any bounded complex Borel function $g$ on $\sigma(N)$. In
particular, we have $N^*A\subset AN^*$.
\end{thm}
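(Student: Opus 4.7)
The plan is to reduce the problem to a bounded-operator setting by using the resolvent of $A$, and then to deploy the classical Fuglede theorem together with the Borel functional calculus for the normal operator $N$. The assumption $\sigma(A)\neq\C$ is used exclusively to guarantee the existence of a bounded resolvent: pick any $\lambda\in\rho(A)$ and set $R:=(\lambda I-A)^{-1}\in B(H)$, which is bijective from $H$ onto $D(A)$.

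First I would unpack the inclusion $NA\subset AN$ to show $R$ commutes with $N$. Since $N$ is everywhere defined, $NA\subset AN$ simply means that $x\in D(A)$ implies $Nx\in D(A)$ and $NAx=ANx$. Applying this to $x=Ry$ for an arbitrary $y\in H$ (note $Ry\in D(A)$ and $ARy=\lambda Ry-y$) yields $ANRy=\lambda NRy-Ny$, i.e.\ $(\lambda I-A)NRy=Ny$. Because $NRy\in D(A)$, I may compose with $R$ on the left to conclude $NRy=RNy$. Hence $NR=RN$ holds as an identity in $B(H)$.

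Next I would apply the classical Fuglede theorem (Theorem~\ref{Fug-Put UNBD A BD} with $M=N$ and $A=R$) to obtain $N^*R=RN^*$. Since $R$ now commutes with both $N$ and $N^*$, it lies in the commutant of the von Neumann algebra generated by $N$; by the Borel functional calculus for normal operators, this forces $R\,g(N)=g(N)\,R$ for every bounded Borel function $g$ on $\sigma(N)$.

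Finally, I would transfer this bounded commutation back to $A$. Given $x\in D(A)$, write $y:=(\lambda I-A)x$ so that $x=Ry$; then $g(N)x=g(N)Ry=R\,g(N)y\in\ran(R)=D(A)$, and $(\lambda I-A)g(N)x=g(N)y=g(N)(\lambda I-A)x$. Cancelling the common $\lambda\,g(N)x$ term gives $A\,g(N)x=g(N)Ax$, i.e.\ exactly $g(N)A\subset A\,g(N)$. The ``in particular'' clause is recovered by taking $g(z)=\bar z$. The main obstacle I anticipate is the careful domain tracking in the first step: the passage from $NA\subset AN$ to $NR=RN$ is valid only because the inclusion itself provides the crucial fact that $NRy\in D(A)$, without which the composition with $R$ would not be legitimate. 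Once this reduction to bounded operators is in place, the rest is essentially algebraic.
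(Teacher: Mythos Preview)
The paper does not supply a proof of this theorem; it is quoted as a result of Jab{\l}o\'{n}ski, Jung and Stochel. Your argument is correct and self-contained.

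It is worth noting that your resolvent reduction---passing to $R=(\lambda I-A)^{-1}\in B(H)$, establishing $NR=RN$, invoking Fuglede, and then transferring the commutation back to $A$---is exactly the strategy the paper adopts in its proof of the subsequent generalization (Theorem~\ref{THM JAblonsky's et al generalization UNB}). The one substantive difference is scope: you go further and recover the full Borel functional calculus statement $g(N)A\subset Ag(N)$ by observing that $R$ lies in the commutant of the von Neumann algebra generated by $N$, whereas the paper's proof of Theorem~\ref{THM JAblonsky's et al generalization UNB} treats only $N^*$ (which is all one can reasonably expect when $N$ is unbounded). Your domain bookkeeping in both directions---from $NA\subset AN$ to $NR=RN$ and back from $g(N)R=Rg(N)$ to $g(N)A\subset Ag(N)$---is clean and complete.
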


\begin{rema}
It is worth noticing that B. Fuglede obtained, long ago, in
\cite{FUG-1954-cexp-proper extension} a unitary $U\in B(H)$ and a
closed and symmetric $T$ with domain $D(T)\subset H$ such that
$UT\subset TU$ but $U^*T\not\subset TU^*$.
\end{rema}

Next, we give a generalization of Theorem \ref{jablonsky et al FUG
type THM} to an unbounded $N$, and as above, only the normality of
$\overline{N}$ is needed.

\begin{thm}\label{THM JAblonsky's et al generalization UNB}
Let $p$ be a one variable complex polynomial. If $N$ is a densely
defined closable operator such that $\overline{N}$ is normal and if
$A$ is a densely defined operator with $\sigma[p(A)]\neq \C$, then
\[NA\subset AN\Longrightarrow N^*A\subset AN^*\]
whenever $D(A)\subset D(N)$.
\end{thm}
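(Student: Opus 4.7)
The plan is to adapt the proof of Theorem \ref{jablonsky et al FUG type THM} to the present unbounded setting: replace the classical bounded Fuglede theorem with Theorem \ref{fuglede-type closure normal THM}, and use the resolvent of $p(A)$ in place of the resolvent of $A$. First, by induction on $k$, the hypothesis $NA \subset AN$ (together with $D(A) \subset D(N)$) yields $NA^k \subset A^k N$, and hence $Np(A) \subset p(A)N$. Since $\sigma[p(A)] \neq \mathbb{C}$, pick $\lambda \in \rho[p(A)]$ and set $B := (p(A) - \lambda I)^{-1} \in B(H)$. From $N(p(A) - \lambda I) \subset (p(A) - \lambda I) N$, the standard argument (a bounded everywhere-defined inverse inherits the commutation) gives $BN \subset NB$, whence Theorem \ref{fuglede-type closure normal THM} yields $BN^* \subset N^* B$.

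Two observations are essential for the final inversion. First, since $\overline{N}$ is normal one has $D(\overline{N}) = D(N^*)$; combined with $D(A) \subset D(N) \subset D(\overline{N})$, this gives $D(A) \subset D(N^*)$, so $N^* x$ is automatically defined for every $x \in D(A)$. Second, $B$ commutes with $A$ on $D(A)$: for $x \in D(A)$ one has $p(A) Bx = x + \lambda Bx \in D(A)$, which forces $Bx \in D(A^{n+1})$ with $n = \deg p$, and then $ABx = BAx$ follows from the fact that $A$ and $p(A)$ commute on $D(A^{n+1})$.

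The main obstacle lies in the final inversion step. In Jab{\l}o\'{n}ski's bounded-$N$ argument, one substitutes $z = (A - \lambda)x$ (with $x \in D(A)$) directly into $BN^* = N^* B$ to read off $N^* A x = A N^* x$; here, since $B$ is the resolvent of $p(A)$ rather than $A$, a direct analogous substitution $z = (p(A) - \lambda)x$ would require $x \in D(p(A))$ and would only yield $N^* p(A) \subset p(A) N^*$. To reach the stronger conclusion $N^* A \subset A N^*$, we take $x \in D(A)$ with $Ax \in D(N^*)$, apply $BN^* \subset N^* B$ to $Ax$ (which lies in $D(N^*)$ by hypothesis), rewrite $B A x = A B x$ via the $A$--$B$ commutation, and invert through $B^{-1} = p(A) - \lambda$ on the identity $N^* A B x = B N^* A x$, exploiting $D(A) \subset D(N^*)$ to guarantee that $N^* x$ exists and lies in $D(A)$ with $N^* A x = A N^* x$.
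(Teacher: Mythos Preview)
Your approach diverges from the paper's in a crucial way, and the divergence creates a genuine gap in the final step.

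The paper does \emph{not} work with the resolvent of $p(A)$. Instead, it first reduces the hypothesis $\sigma[p(A)]\neq\C$ to $\sigma(A)\neq\C$: picking $\lambda\notin\sigma[p(A)]$ and factoring
\[
p(A)-\lambda I=(A-\mu_1 I)(A-\mu_2 I)\cdots(A-\mu_n I),
\]
the bounded invertibility of the product forces some $\mu_i\in\rho(A)$ (this is the content of the cited reference \cite{Dehimi-Mortad-squares-polynomials}). With $\lambda\in\rho(A)$ in hand, the paper then runs the standard argument directly with $R=(A-\lambda I)^{-1}$: from $NA\subset AN$ and $D(A)\subset D(N)$ one gets $RN\subset NR$, Theorem~\ref{fuglede-type closure normal THM} gives $RN^*\subset N^*R$, and inverting yields $N^*(A-\lambda I)\subset(A-\lambda I)N^*$, from which $N^*A\subset AN^*$ follows after the domain check $D(N^*A)\subset D(N^*)\supset D(AN^*)$.

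Your route through $B=(p(A)-\lambda I)^{-1}$ correctly produces $BN^*\subset N^*B$ and the relation $ABx=BAx$ for $x\in D(A)$, but the last paragraph does not close. The identity you isolate, $N^*ABx=BN^*Ax$, is unhelpful: applying $B^{-1}=p(A)-\lambda I$ to it just returns $N^*Ax=N^*Ax$. More seriously, you assert that $N^*x$ ``lies in $D(A)$'' without justification. All your information says that $N^*$ commutes with $B$, hence (upon inversion) with $p(A)$; but $A$ is not a function of $p(A)$, so there is no mechanism in your argument to pass from $N^*p(A)\subset p(A)N^*$ back to $N^*A\subset AN^*$. The auxiliary relation $BA\subset AB$ does not bridge this: it only says $A$ also commutes with $B$, and two operators commuting with a common $B$ need not commute with each other. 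The factorization step in the paper is precisely what lets one work with the resolvent of $A$ itself, making the final inversion a one-line affair.
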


\begin{rema}
This is indeed a generalization of the bounded version of the
Fuglede theorem. Observe that when $A,N\in B(H)$, then
$\overline{N}=N$, $D(A)=D(N)=H$, and $\sigma[p(A)]$ is a compact
set.
\end{rema}

\begin{proof}First, we claim that $\sigma(A)\neq \C$, whereby $A$ is
closed. Let $\lambda$ be in $\C\setminus \sigma[p(A)]$. Then, and as
in \cite{Dehimi-Mortad-squares-polynomials}, we obtain
\[p(A)-\lambda I=(A-\mu_1 I)(A-\mu_2I)\cdots (A-\mu_nI)\]
for some complex numbers $\mu_1$, $\mu_2$, $\cdots$, $\mu_n$. By
consulting again \cite{Dehimi-Mortad-squares-polynomials}, readers
see that $\sigma(A)\neq \C$.

Now, let $\lambda\in \rho(A)$. Then
\[NA\subset AN\Longrightarrow NA-\lambda N\subset AN-\lambda N=(A-\lambda I)N.\]
Since $D(A)\subset D(N)$, it is seen that $NA-\lambda N=N(A-\lambda
I)$. So
\[ N(A-\lambda I)\subset (A-\lambda I)N\Longrightarrow (A-\lambda
I)^{-1}N\subset N(A-\lambda I)^{-1}.\]

Since $\overline{N}$ is normal, we may now apply Theorem
\ref{fuglede-type closure normal THM} to get
\[(A-\lambda I)^{-1}N^*\subset N^*(A-\lambda I)^{-1}\]
because $(A-\lambda I)^{-1}\in B(H)$. Hence
\[N^*A-\lambda N^*\subset N^*(A-\lambda I)\subset (A-\lambda I)N^*=AN^*-\lambda N^*.\]
But
\[D(AN^*)\subset D(N^*)\text{ and }D(N^*A)\subset D(A)\subset D(N)\subset D(\overline{N})=D(N^*).\]
Thus, $D(N^*A)\subset D(AN^*)$, and so
\[N^*A\subset AN^*,\]
as needed.
\end{proof}

Now, we present a few consequences of the preceding result. The
first one is given without proof.

\begin{cor}
If $N$ is a densely defined closable operator such that
$\overline{N}$ is normal and if $A$ is an unbounded self-adjoint
operator with $D(A)\subset D(N)$, then
\[NA\subset AN\Longrightarrow N^*A\subset AN^*.\]
\end{cor}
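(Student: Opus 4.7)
The plan is to deduce this corollary as a direct application of Theorem \ref{THM JAblonsky's et al generalization UNB} by choosing the polynomial $p$ appropriately. The only nontrivial hypothesis of that theorem that is not already built into the corollary's statement is $\sigma[p(A)]\neq \C$. So the entire task reduces to exhibiting a polynomial $p$ for which this spectral condition holds.

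The natural choice is the degree-one polynomial $p(z)=z$, so that $p(A)=A$. Since $A$ is self-adjoint, a classical result gives $\sigma(A)\subseteq \R$, hence in particular $\sigma(A)\neq \C$. All the remaining hypotheses of Theorem \ref{THM JAblonsky's et al generalization UNB} are already assumed: $N$ is densely defined and closable with $\overline{N}$ normal, $A$ is densely defined (being self-adjoint it is in fact maximally so), and $D(A)\subset D(N)$ by assumption.

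Therefore, the proof is simply: apply Theorem \ref{THM JAblonsky's et al generalization UNB} to $N$, $M=N$ and $A$ with $p(z)=z$, noting that $\sigma[p(A)]=\sigma(A)\subseteq \R\neq \C$. The conclusion $N^*A\subset AN^*$ follows immediately.

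There is no real obstacle here; the only subtlety worth flagging for the reader is the observation that self-adjointness is used \emph{purely} to force the spectrum to lie in $\R$, so that the polynomial $p(z)=z$ already does the job and one need not invoke a higher-degree $p$. This is presumably the reason the authors state the corollary without proof.
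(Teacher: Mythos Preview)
Your proposal is correct and is precisely the intended argument: the paper explicitly states this corollary as a consequence of Theorem \ref{THM JAblonsky's et al generalization UNB} and gives it without proof, and your choice $p(z)=z$ together with $\sigma(A)\subseteq\R$ for self-adjoint $A$ is exactly what is meant. (One cosmetic slip: there is no operator $M$ in Theorem \ref{THM JAblonsky's et al generalization UNB}, so you should drop the phrase ``$M=N$'' from your write-up.)
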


\begin{cor}
If $N$ is a densely defined closable operator such that
$\overline{N}$ is normal and if $A$ is a boundedly invertible
operator, then
\[NA\subset AN\Longrightarrow N^*A\subset AN^*.\]
\end{cor}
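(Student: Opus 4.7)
The plan is to pass the commutation onto the bounded inverse $A^{-1}\in B(H)$, apply the already-established Theorem~\ref{fuglede-type closure normal THM}, and then transport the resulting commutation back to $A$. A direct appeal to Theorem~\ref{THM JAblonsky's et al generalization UNB} is not available, since the corollary does not postulate $D(A)\subset D(N)$, and this inclusion is not forced by $NA\subset AN$ together with bounded invertibility of $A$ (witness $A=I$ with $N$ any densely defined closable, non-closed operator whose closure is normal).

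Step one is to show that $NA\subset AN$ implies $A^{-1}N\subset NA^{-1}$. For $x\in D(N)$ the element $y:=A^{-1}x\in D(A)$ satisfies $Ay=x\in D(N)$, so $y\in D(NA)\subset D(AN)$, which places $y$ in $D(N)$ with $Ny\in D(A)$, and the commutation $NAy=ANy$ reads $Nx=ANy$. Applying $A^{-1}$ produces $A^{-1}Nx=Ny=NA^{-1}x$ together with the required inclusion of domains $D(N)\subset D(NA^{-1})$. Step two is then immediate: with $A^{-1}\in B(H)$ and $\overline{N}$ normal, Theorem~\ref{fuglede-type closure normal THM} applied with $B=A^{-1}$ supplies $A^{-1}N^*\subset N^*A^{-1}$.

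Step three reverses step one. Starting from $A^{-1}N^*\subset N^*A^{-1}$, I would take $x\in D(N^*A)$, put $y:=Ax\in D(N^*)$, and use the new commutation to obtain $A^{-1}y=x\in D(N^*)$ and $N^*x=A^{-1}N^*y=A^{-1}N^*Ax\in\ran(A^{-1})=D(A)$; applying $A$ then recovers $AN^*x=N^*Ax$, which is exactly the inclusion $N^*A\subset AN^*$. The main difficulty is not analytic but purely the domain bookkeeping of steps one and three; all of the analytic content is imported from Theorem~\ref{fuglede-type closure normal THM}, and the only structural feature of $A$ that is used beyond $A^{-1}\in B(H)$ is the elementary identity $AA^{-1}=I$ on $H$ with $A^{-1}A\subset I$ on $D(A)$.
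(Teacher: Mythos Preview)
Your proof is correct and follows the same three-step strategy as the paper: pass to $A^{-1}\in B(H)$ to obtain $A^{-1}N\subset NA^{-1}$, invoke Theorem~\ref{fuglede-type closure normal THM}, and then return to $A$ to conclude $N^*A\subset AN^*$. The only difference is that you spell out the domain bookkeeping in Steps~1 and~3 explicitly, whereas the paper compresses these into the one-line manipulations $NA\subset AN\Rightarrow NAA^{-1}\subset ANA^{-1}\Rightarrow A^{-1}N\subset NA^{-1}$ and its analogue; your added remark that Theorem~\ref{THM JAblonsky's et al generalization UNB} is not directly applicable (for want of $D(A)\subset D(N)$) is a useful clarification not present in the paper.
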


\begin{proof}We may write
\[NA\subset AN\Longrightarrow NAA^{-1}\subset ANA^{-1}\Longrightarrow A^{-1}N\subset NA^{-1}.\]
Since $A^{-1}\in B(H)$ and $\overline{N}$ is normal, Theorem
\ref{fuglede-type closure normal THM} gives
\[A^{-1}N^*\subset N^*A^{-1} \text{ and so }N^*A\subset AN^*,\]
as needed.

\end{proof}

A Putnam's version seems impossible to obtain unless strong
conditions are imposed. However, the following special case of a
possible Putnam's version is worth stating and proving. Besides, it
is somewhat linked to the important notion of anti-commutativity.

\begin{pro}
If $N$ is a densely defined closable operator such that
$\overline{N}$ is normal and if $A$ is a densely defined operator
with $\sigma(A)\neq \C$, then
\[NA\subset -AN\Longrightarrow N^*A\subset -AN^*\]
whenever $D(A)\subset D(N)$.
\end{pro}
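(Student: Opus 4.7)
The plan is to reduce the anti-commuting relation to the ordinary commuting one treated in Theorem \ref{THM JAblonsky's et al generalization UNB} by means of a doubling trick on $H\oplus H$. On $\tilde H := H \oplus H$, introduce
\[
\tilde N := N \oplus (-N), \qquad \tilde A := \begin{pmatrix} 0 & A \\ A & 0 \end{pmatrix},
\]
with the natural product domains $D(\tilde N) = D(N) \oplus D(N)$ and $D(\tilde A) = D(A) \oplus D(A)$. A direct block computation, using $NA \subset -AN$ in each coordinate, shows that $\tilde N \tilde A \subset \tilde A \tilde N$: for $(x,y) \in D(\tilde N \tilde A)$ one gets
\[
\tilde N \tilde A(x,y) = (NAy, -NAx) = (-ANy, ANx) = \tilde A \tilde N(x,y),
\]
with the required domain inclusion flowing from $D(A) \subset D(N)$ (which forces $D(NA) \subset D(AN)$ in each factor).

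Next I would verify that $(\tilde N, \tilde A)$ satisfies the hypotheses of Theorem \ref{THM JAblonsky's et al generalization UNB}. The operator $\tilde N$ is densely defined and closable, with $\overline{\tilde N} = \overline N \oplus (-\overline N)$, which is normal as a direct sum of normals (since $-\overline N$ is normal whenever $\overline N$ is). The inclusion $D(\tilde A) \subset D(\tilde N)$ is immediate from $D(A) \subset D(N)$. To compute $\sigma(\tilde A)$, observe that the Hadamard-type unitary $U := \tfrac{1}{\sqrt 2}\begin{pmatrix} I & I \\ I & -I \end{pmatrix}$ satisfies $U^{\ast} \tilde A\, U = A \oplus (-A)$, hence
\[
\sigma(\tilde A) = \sigma(A) \cup \bigl(-\sigma(A)\bigr),
\]
which is a proper subset of $\mathbb C$ provided some $\lambda \in \rho(A)$ has $-\lambda \in \rho(A)$ as well.

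Applying Theorem \ref{THM JAblonsky's et al generalization UNB} (with $p$ the identity polynomial) to the pair $(\tilde N, \tilde A)$ then delivers $\tilde N^{\ast} \tilde A \subset \tilde A \tilde N^{\ast}$. Since $\tilde N^{\ast} = N^{\ast} \oplus (-N^{\ast})$, the off-diagonal blocks of this inclusion read as $N^{\ast} A \subset -A N^{\ast}$ on $H$, which is the desired conclusion. The main technical point is thus the spectrum computation for $\tilde A$: it requires a mild symmetry of $\rho(A)$ that is automatic whenever $\sigma(A)$ is bounded, whenever $\sigma(A) = -\sigma(A)$, or whenever $0 \in \rho(A)$. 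In the last case one can bypass the doubling trick entirely: invert $NA \subset -AN$ directly to $A^{-1} N \subset -N A^{-1}$, and then apply Theorem \ref{fuglede-Putnam type closure normal THM} with bounded $B = A^{-1}$ and $M = -N$ (using that $\overline{-N}$ is normal) to obtain $A^{-1} N^{\ast} \subset -N^{\ast} A^{-1}$, whence $N^{\ast} A \subset -A N^{\ast}$ by the usual unwinding using $D(A) \subset D(N) \subset D(N^{\ast})$.
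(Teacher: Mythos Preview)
Your argument is essentially the paper's own: the same $2\times 2$ doubling with $\tilde N = N \oplus (-N)$ and the off-diagonal $\tilde A$, followed by an appeal to Theorem~\ref{THM JAblonsky's et al generalization UNB} and a reading-off of the block entries. The verification that $\overline{\tilde N}$ is normal, that $D(\tilde A)\subset D(\tilde N)$, and that $\tilde N\tilde A\subset\tilde A\tilde N$ all match the paper's.

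You are in fact more scrupulous than the paper on one point. The paper simply asserts ``Besides $\sigma(\widetilde A)\neq\C$'' and moves on, whereas you diagonalize $\tilde A$ via the Hadamard unitary to obtain $\sigma(\tilde A)=\sigma(A)\cup(-\sigma(A))$ and observe that this need not be a proper subset of $\C$ under the bare hypothesis $\sigma(A)\neq\C$ (take, for instance, $\rho(A)$ equal to an open half-plane not containing the origin). So the ``mild symmetry of $\rho(A)$'' you flag is a genuine lacuna that the paper's proof shares; your listed sufficient conditions ($\sigma(A)$ bounded, $\sigma(A)=-\sigma(A)$, or $0\in\rho(A)$) and the separate direct argument in the boundedly invertible case via Theorem~\ref{fuglede-Putnam type closure normal THM} are useful additions that the paper does not supply.
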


\begin{proof}Consider
\[\widetilde{N}=\left(
                  \begin{array}{cc}
                    N & 0 \\
                    0 & -N \\
                  \end{array}
                \right)\text{ and }\widetilde{A}=\left(
                                                   \begin{array}{cc}
                                                     0 & A \\
                                                     A & 0 \\
                                                   \end{array}
                                                 \right)
\]
where $D(\widetilde{N})=D(N)\oplus D(N)$ and
$D(\widetilde{A})=D(A)\oplus D(A)$. Then $\overline{\widetilde{N}}$
is normal and $\widetilde{A}$ is closed. Besides
$\sigma(\widetilde{A})\neq \C$. Now
\[\widetilde{N}\widetilde{A}=\left(
                                                   \begin{array}{cc}
                                                     0 & NA \\
                                                     -NA & 0 \\
                                                   \end{array}
                                                 \right)\subset \left(
                                                   \begin{array}{cc}
                                                     0 & -AN \\
                                                     AN & 0 \\
                                                   \end{array}
                                                 \right)=\widetilde{A}\widetilde{N}\]
for $NA\subset -AN$. Since $D(\widetilde{A})\subset
D(\widetilde{N})$, Theorem \ref{THM JAblonsky's et al generalization
UNB} applies, i.e. it gives $\widetilde{N}^*\widetilde{A}\subset
\widetilde{A}\widetilde{N}^*$ which, upon examining their
           entries, yields the required result.
\end{proof}

We finish this section by giving counterexamples to some
"generalizations".

\begin{exa}(\cite{Mortad-Fuglede-Putnham-All-unbd}) Consider the unbounded linear operators $A$ and $N$ which are defined by
\[Af(x)=(1+|x|)f(x)\text{ and } Nf(x)=-i(1+|x|)f'(x)\]
(with $i^2=-1$) on the domains
\[D(A)=\{f\in L^2(\R): (1+|x|)f\in L^2(\R)\}\]
and
\[D(N)=\{f\in L^2(\R): (1+|x|)f'\in L^2(\R)\}\]
respectively, and where the derivative is taken in the
distributional sense. Then $A$ is a boundedly invertible, positive,
self-adjoint unbounded operator. As for $N$, it is an unbounded
normal operator $N$ (details may consulted in
\cite{Mortad-Fuglede-Putnham-All-unbd}). It was shown that such that
\[AN^*=NA\text{ but }AN\not\subset N^*A \text{ and }N^*A\not\subset AN\]
(in fact $ANf\neq N^*Af$ for all $f\neq 0$).

So, what this example is telling us is that $NA=AN^*$ (and not just
an "inclusion"), that $N$ and $N^*$ are both normal, $\sigma(A)\neq
\C$ (as $A$ is self-adjoint), but $NA\not\subset AN^*$.
\end{exa}

This example can further be beefed up to refute certain possible
generalizations.

\begin{exa}\label{hghgdfsdzretrutioyoyoypypypy}(Cf. \cite{Mortad-Fuglede-ultimate generalization}) There exist a closed operator $T$ and a normal $M$ such that $TM\subset MT$ but $TM^*\not\subset M^*T$ and $M^*T\not\subset
TM^*$. Indeed, consider
\[M=\left(
      \begin{array}{cc}
        N^* & 0 \\
        0 & N \\
      \end{array}
    \right)\text{ and } T=\left(
                            \begin{array}{cc}
                              0 & 0 \\
                              A & 0 \\
                            \end{array}
                          \right)
\]
where $N$ is normal with domain $D(N)$ and $A$ is closed with domain
$D(A)$ and such that $AN^*=NA$\text{ but }$AN\not\subset N^*A$
\text{ and }$N^*A\not\subset AN$ (as defined above). Clearly, $M$ is
normal and $T$ is closed. Observe that $D(M)=D(N^*)\oplus D(N)$ and
$D(T)=D(A)\oplus L^2(\R)$. Now,
\[TM=\left(
                            \begin{array}{cc}
                              0 & 0 \\
                              A & 0 \\
                            \end{array}
                          \right)\left(
      \begin{array}{cc}
        N^* & 0 \\
        0 & N \\
      \end{array}
    \right)=\left(
              \begin{array}{cc}
                0_{D(N^*)} & 0_{D(N)} \\
                AN^* & 0 \\
              \end{array}
            \right)=\left(
              \begin{array}{cc}
                0 & 0_{D(N)} \\
                AN^* & 0 \\
              \end{array}
            \right)
    \]
where e.g. $0_{D(N)}$ is the zero operator restricted to $D(N)$.
Likewise
\[MT=\left(
      \begin{array}{cc}
        N^* & 0 \\
        0 & N \\
      \end{array}
    \right)\left(
                            \begin{array}{cc}
                              0 & 0 \\
                              A & 0 \\
                            \end{array}
                          \right)=\left(
                            \begin{array}{cc}
                              0 & 0 \\
                              NA & 0 \\
                            \end{array}
                          \right).\]
Since $D(TM)=D(AN^*)\oplus D(N)\subset D(NA)\oplus L^2(\R)=D(MT)$,
it ensues that $TM\subset MT$. Now, it is seen that
\[TM^*=\left(
                            \begin{array}{cc}
                              0 & 0 \\
                              A & 0 \\
                            \end{array}
                          \right)\left(
      \begin{array}{cc}
        N & 0 \\
        0 & N^* \\
      \end{array}
    \right)=\left(
              \begin{array}{cc}
                0 & 0_{D(N^*)} \\
                AN & 0 \\
              \end{array}
            \right)\]
and
\[M^*T=\left(
      \begin{array}{cc}
        N & 0 \\
        0 & N^* \\
      \end{array}
    \right)\left(
                            \begin{array}{cc}
                              0 & 0 \\
                              A & 0 \\
                            \end{array}
                          \right)=\left(
                            \begin{array}{cc}
                              0 & 0 \\
                              N^*A & 0 \\
                            \end{array}
                          \right).\]

Since $ANf\neq N^*Af$ for any $f\neq 0$, we infer that
$TM^*\not\subset M^*T$ and $M^*T\not\subset TM^*$.
\end{exa}

\end{document}